\newfont{\sdbl}{msbm9}
\newfont{\dbl}{msbm10 at 12pt}
\theoremstyle{definition}
\newcommand{\dn}{{\mbox{\dbl N}}}
\newcommand{\dc}{{\mbox{\dbl C}}}
\newcommand{\dq}{{\mbox{\dbl Q}}}
\newcommand{\ord}{\mathop{\rm ord}\nolimits}
\newcommand{\End}{\mathop {\rm End}}
\newcommand{\Aut}{\mathop {\rm Aut}}
\newcommand{\rk}{\mathop {\rm rk}}
\newcommand{\Projj}{\mathop {\rm Proj}}
\newtheorem{Def}{Definition}[section]
\newtheorem{rem}{Remark}[section]
\theoremstyle{plain}
\newtheorem{theorem}{Theorem}[section]
\newtheorem{lemma}{Lemma}[section]
\numberwithin{equation}{section}
\begin{document}
\title{On some questions around Berest's conjecture}
\author{Junhu Guo \and  A.B. Zheglov}
\date{}
\maketitle

\begin{abstract}
Let $K$ be a field of characteristic zero, let $A_1=K[x][\partial ]$ be the first Weyl algebra. 
In this paper we prove the following two results.

Assume there exists a non-zero polynomial $f(X,Y)\in K[X,Y]$, which has a non-trivial solution $(P,Q)\in A_{1}^{2}$ with $[P,Q]=0$, and the number of orbits under the group action of $Aut(A_1)$ on  solutions of $f$ in $A_{1}^{2}$ is finite. Then the Dixmier conjecture holds, i.e $\forall \varphi\in End(A_{1})-\{0\}$, $\varphi$ is an automorphism.

Assume $\varphi$ is an endomorphism of monomial type (in particular, it is not an automorphism, see theorem \ref{T:fixed}). Then it has no non-trivial fixed point, i.e.   there are no $P\in A_1$, $P\notin K$, s.t. $\varphi (P)=P$. 
\end{abstract}

\tableofcontents

\section{Introduction}

Consider an irreducible polynomial $f(X,Y)\in \dc [X,Y]$.  
Assume there exists a pair of operators $(P,Q)\in A_1$, where $A_1= \dc [x][\partial ]$ is the first Weyl algebra, such that $f(P,Q)=0$ and $\ord (P)>0$ or $\ord (Q)>0$, 
where $\ord (P)=k$ if $P=\sum_{i=0}^k a_i \partial^i$ with $a_i\neq 0$
 (we'll call such a pair {\it a non-trivial solution} of the equation $f=0$). 

The group of automorphisms  of the first Weyl algebra $A_1$ acts on the set of solutions of the equation $f(X,Y)=0$,  i.e. if $P,Q\in A_1$ satisfy the equation and $\varphi\in Aut(A_1)$, then $\varphi (P),\varphi (Q)$ also satisfy the equation, for this reason we define two pair $(P,Q)$ ,$(P',Q')$ are in the same orbit iff there exists automorphism $\varphi$, such that $\varphi(P,Q)=(P',Q')$. A natural and important problem is to describe the orbit space of the group action of $Aut(A_1)$ in the set of solutions.

Y. Berest,  cf. \cite{MZh},  proposed the following interesting conjecture:

{\it If the Riemann surface corresponding to the equation $f=0$  has genus $g=1$ then the orbit space is infinite, and if $g>1$ then there are only finite number of orbits.}

This conjecture was first studied in the work \cite{MZh}, where its connection with the well known open conjecture of Dixmier was announced. Recall that the Dixmier conjecture (formulated in his seminal work \cite{Dixmier}) claims that any non-zero endomorphism of $A_{1}$ is actually an automorphism. Analogous conjectures exist for any Weyl algebras and are stably equivalent to the Jacobian conjectures, cf. \cite{Ts1}, \cite{Ts2}, \cite{BK}, and are still open in any dimension. Although the results of the work \cite{MZh} showed that the Berest conjecture is not true as it was stated, it is still interesting, as it is not clear whether there exist examples of Riemann surfaces (or algebraic curves defined by one equation, see the discussion below)  with finite number of orbits.

The aim of this paper is to clarify the connection of the last question with the Dixmier conjecture. First, we can formulate a more general question.

Let $K$ be a field of characteristic zero. From now on we will denote by $A_1$ the first Weyl algebra over $K$: $A_1= K [x][\partial ]$\footnote{Although elements in $A_{1}$ can be written in  many kinds of forms, we'll  write them in the form  $P=\sum a_{ij}x^{i}\partial^{j}$.

In Dixmier's paper this algebra was defined as  $A_{1}=K[p,q]$ with the relation $[p,q]=1$, and he put $p$ on the left of $q$. When comparing, we should take $\partial$ instead of $p$, and $x$ instead of $q$. Though Dixmier used another form of elements (he put $p$ on the left of $q$), the difference is not essential for applications of his results in our paper.}. Consider a non-zero irreducible polynomial 
$$
f(X,Y)=\sum_{i,j=0}^n \alpha_{ij}X^iY^j = 0, \quad \alpha_{ij}\in K.
$$
The old result by Burchnall and Chaundy \cite{BC} says that any two commuting differential operators $P,Q\in D:=K[[x]][\partial ]$ are algebraically dependent, more precisely, there exists an irreducible polynomial $f(X,Y)$ of {\it special form}: $f(X,Y) = \alpha X^m\pm Y^n+\ldots $ (i.e. the weighted degree of $f$ is $mn$, and $\ldots$ mean terms of lower weighted degree), such that $f(P,Q)=0$ (cf. \cite[Lemma 5.23]{Zheglov_book}). Vice versa, if $P,Q\in D$ is a solution of such polynomial $f(X,Y)$\footnote{A solution of the equation $f(X,Y)=0$  is a pair $(P,Q)\in D$ such that $\sum_{i,j=0}^n \alpha_{ij}P^iQ^j = 0$.}, then  $[P,Q]=0$.  Thus, the question of Berest appears to be closely connected with the well known theory of commuting ordinary differential operators (for a review of the modern state of this theory see e.g. the book \cite{Zheglov_book} and references therein)\footnote{A natural question whether $f(P,Q)=0 \Rightarrow [P,Q]=0$ for generic polynomial $f$ appears to be much more difficult, and we are going to return to this question in subsequent paper.}. 

Consider now the commutative ring $B=K[P,Q]\subset A_1$, where $(P,Q)$ is a solution of the equation $f(X,Y)=0$. It is well known that $B$ is an integral domain, i.e. it defines an affine plain algebraic curve. This curve admits a natural compactification. Namely, 
recall that in $A_1$ there is a natural order filtration 
$$
K[x]=A_1^0\subset A_1^1\subset A_1^2 \subset \ldots , \quad A_1^n=\{P\in A_1\, \ord (P)\le n\}.
$$
 This filtration induces the filtration on $B$: $B^n=B\cap A_1^n$. The compactification (the projective spectral curve) is defined as  $C:=\Projj \tilde{B}$, where $\tilde{B}=\oplus_{i=0}^{\infty} B^n$ is the Rees ring. Denote by $p_a(C)$ the arithmetic genus $C$. The Berest conjecture now can be improved as a question what is the orbit space of  solutions of a given irreducible equation $f(X,Y)=0$ depending on the genus $p_a(C)$. 

Let's note that the question whether there exists a non-trivial solution of a given equation is rather non-trivial. First non-trivial examples appeared in the same paper of Dixmier \cite{Dixmier}:
$$
 L_{4}=\left(\partial^2-x^3-\alpha\right)^2-2x,\quad 
 L_{6}=\left(\partial^2-x^3-\alpha\right)^3-
 \frac{3}{2}\left(x\left(\partial^2-x^3-\alpha\right)+
 \left(\partial^2-x^3-\alpha\right)x\right),
$$
$$f(L_4, L_6):\quad  L_6^2=L_4^3-\alpha \quad p_a(C)=1.$$

Further examples were invented in connection with one question of I.M. Gelfand, who asked whether there are commuting operators from the first Weyl algebra such that their spectral curve has any given genus and their rank is a given one, where the rank of any given commutative ring $B\subset A_1$ is defined as the number $\rk B= GCD\{\ord (Q), Q\in B\}$. 

 An answer to this question was given only recently, after works of A. Mironov \cite{M1}, \cite{M2}, who constructed a series of examples of rank two and three operators corresponding to a hyperelliptic curve of any given genus, and works of O. Mokhov \cite{Mokh1}, \cite{Mokh2}, who found a way to produce commuting operators of any rank $>1$ with the help of Mironov's examples. At last, in a  paper \cite{ML} L. Makar-Limanov proved that any ring $B$ of rank one can be embedded into the ring $K[z]$, i.e. the spectral curve of any such ring is rational (and if one considers a non-trivial ring $B$, this curve is singular). Interested reader will also benefit by looking at recent works \cite{Vardan34}, \cite{Previato2019} related to this question.
 
All these results show that there are many non-trivial solutions in $A_1$, i.e. the question of Berest is meaningful. It was studied since the work \cite{MZh} in several papers: in \cite{MZh} and \cite{SMZh} it was shown that the number of orbits is infinite for spectral curves of genus one (more precisely, even the number of orbits of pairs $(L_4, L_6)$ with $\ord (L_4)=4$, $\ord (L_6)=6$ is infinite) and there are examples of  hyperelliptic curves of any genus with infinite number of orbits. In the paper \cite{MirDav} the same result was established for almost all hyperelliptic curves of genus 2 (the field $K=\dc$ in all these works). An interesting result for $K=\dq$ was obtained in a recent paper \cite{Gundareva} by A. Gundareva: for an elliptic curve over $\dq$ the orbits of pairs $(L_4, L_6)$, where 
$L_{4}=(\partial^2+V(x))^2+W(x),$ and $W(x)$ is a polynomial of order $2$, are parametrized by the rational points of this curve. 

The first result of our paper is related to the Berest conjecture as follows: if there exists a non-zero polynomial such that the number of orbits of its non-trivial solutions in $A_1$ is finite and non-zero, then the Dixmier conjecture is true, see theorem \ref{T:main}. 

In view of this statement it is reasonable to continue the study of the Berest conjecture. Since commuting differential operators admit the well known effective classification in terms of algebro-geometric spectral data, it would be interesting to understand the action of the automorphism group $\Aut (A_1)$ on these data. Let's mention that even in the simplest most studied case of genus one spectral curves (see the works \cite{KN}, \cite{G}, \cite{Mokh}, \cite{Grun}, \cite{PZ}, \cite{PW}, \cite{BZ}) this question has not yet been considered, although the methods of the last cited papers allow us to hope that it is possible. 

The second result is also motivated by the Berest and Dixmier conjectures. It is well known, see e.g. \cite[Prop. 6.9]{GGV}, \cite[Corol. 2.5]{Joseph}, that if there exists a counter-example to the Dixmier conjecture (i.e. a non-zero endomorphism which is not an automorphism), then after a composition with some automorphism it becomes of subrectangular type, see  definition \ref{D:subrectangular}. The second theorem says that any non-zero endomorphism of monomial type (it is a weaker property, see definition \ref{D:monomial_type}) has no fixed points, see theorem \ref{T:fixed}. It is related to the first theorem: if  there exists a non-zero polynomial with finite solution orbit, then there should exist an endomorphism of monomial type with a fixed point, see section \ref{S:second}.  

The paper is organized as follows. In section 2 we introduce notation and recall necessary facts from the Dixmier paper \cite{Dixmier}. In section 3 we prove the first theorem. In section 4 we prove the second theorem.

{\bf Acknowledgements.} The work was partially supported by the National Key R and D Program of China (Grant No. 2020YFE0204200), by the School of Mathematical Sciences, Peking University and Sino-Russian Mathematics Center as well as by the Moscow Center of Fundamental and applied mathematics at Lomonosov Moscow State  University.

We are grateful to A.Ya. Kanel-Belov for useful remarks and stimulating questions. 

We are also grateful to the anonymous referee  for his useful hints and for pointing out the paper \cite{GGV}, which allowed us to significantly shorten the proofs of our results.

\section{Preliminaries}

Suppose $K$ is a field of characteristic zero, with algebraic closure $\bar{K}$. Denote by $\bar{A}_1=A_1\otimes_K\bar{K}$.

Recall that the Newton Polygon of a polynomial $f(x,y)=\sum c_{ij}x^{i}y^{j}\in K[x,y]$ is the convex hull of all the points $(i,j)$ with respect to those $c_{ij}\neq 0$.

Following \cite{Dixmier}, we define $E(f)=\{(i,j)|c_{ij}\neq 0 \}$ as the points set of the Newton Polygon. Suppose $\sigma, \rho$ are two real numbers, we define the weight degree $v_{\sigma,\rho}(f)$ and the top points set $E(f,\sigma,\rho)$ as follows:
\begin{Def}
	$v_{\sigma,\rho}(f)=sup\{\sigma i+\rho j|(i,j)\in E(f)\}, \\E(f,\sigma,\rho)=\{(i,j)|v_{\sigma,\rho}(f)=\sigma i +\rho j \}$. 
\end{Def}

If $E(f)=E(f,\sigma,\rho)$, then we call such $f$  {\it $(\sigma,\rho)$-homogeneous}. 
\begin{Def}
For any polynomial $f$, we call  $f_{\sigma , \rho}=f_{\sigma , \rho} (f):= \sum_{(i,j)\in E(f,\sigma,\rho)} c_{ij}x^{i}y^{j}$ the {\it $(\sigma, \rho)$-homogeneous polynomial associated to} $f$. 

The line $l$ through the points of the Newton polygon of $f_{\sigma , \rho}$ is called the {\it $(\sigma, \rho)$-top line} of the Newton polygon.
\end{Def}

We define the Newton Polygon of an operator $P\in A_1$, $P= \sum a_{ij}x^{i}\partial^{j}$ as the Newton polygon of the polynomial $f= \sum a_{ij}x^{i}y^{j}$, and we'll use the same notations $v_{\sigma,\rho}(P)$, $f_{\sigma ,\rho}(P)$, $E(P)$ for operators.

Set $\ord_x(P):=v_{1,0}(P)$. Note that $\ord (P)=v_{0,1}(P)$. For any differential operator (not necessarily from $A_1$) define $HT(P):= a_n$, if $P=\sum_{i=0}^na_i\partial^i$ with $a_n\neq 0$. We'll say $P$ is {\it monic} if $HT(P)=1$, and we'll say $P$ is {\it formally elliptic} if $HT(P)\in K$.

For any two polynomials $f,g$ we define the standard Poisson bracket $\{f,g\}=\frac{\partial f}{\partial x}\frac{\partial g}{\partial y}-\frac{\partial f}{\partial y}\frac{\partial g}{\partial x}$.  We denote $\{ad f\}g :=\{f,g\}$, and for any integer $n,\{ad f\}^{n}g:=\{f,\{f,\cdots g\}\}$. As usual, $(ad (P))Q:=[P,Q]$, where $P,Q\in A_1$. 

The {\it tame automorphisms} of $A_{1}$ are defined as follows: for any integer $n$, and $\lambda\in K$, define 
\begin{equation}
\label{E:tame}
\Phi_{n,\lambda}:
\begin{cases}
\partial  \rightarrow & \partial
\\x   \rightarrow & x+\lambda \partial^{n}
\end{cases},  \quad 
\Phi'_{n,\lambda}:
\begin{cases}
\partial  \rightarrow & \partial+\lambda x^{n}
\\x   \rightarrow & x
\end{cases},  
\quad 
\Phi_{a,b,c,d}:
\begin{cases}
\partial  \rightarrow & a\partial+bx
\\x   \rightarrow & c\partial +dx
\end{cases}  
\end{equation}
with $ad-bc=1$. According to \cite[Th. 8.10]{Dixmier} the group $Aut (A_1)$ is generated by all $\Phi_{n,\lambda}, \Phi'_{n,\lambda}$ and $\Phi_{a,b,c,d}$.



Below we'll need to introduce two definitions related to one important result from the Dixmier paper. For the convenience of the reader we recall it here:

\begin{theorem}{(\cite[Lemma 2.4, 2.7]{Dixmier})}
\label{T:Dixmier2.7}
	Suppose $P,Q$ are two differential operators from $A_{1} $, $\sigma, \rho$ are real numbers, with $\sigma +\rho >0, v=v_{\sigma.\rho}(P) $ and $ w=w_{\sigma.\rho}(Q) $. If now $f_{1}$, $g_{1}$ are the $(\sigma,\rho)$-homogeneous polynomials  associated to $P, Q$, then 
	\begin{enumerate}
	\item
	$\exists T,U\in A_{1}$ such that
	\begin{enumerate}
	\item
	$[P,Q]=T+U$
	\item
	$v_{\sigma.\rho}(U)<v+w-\sigma-\rho$
	\item
	$E(T)=E(T,\sigma,\rho)$ and $v_{\sigma.\rho}(T)=v+w-\sigma-\rho$ if $T\neq 0$.
	\end{enumerate}
	\item
	The following are equivalent
	\begin{enumerate}
	\item
	T=0
	\item
	$\{f_{1},g_{1}   \}=0$;
	
	If $v,w$ are integers, then these two conditions are equivalent to
	\item
      $g_{1}^{v}$ is proportional to $f_{1}^{w}$ 
      \end{enumerate}
  \item Suppose $T\neq 0$, then the polynomial $(\sigma,\rho)$ associated to $[P,Q]$ is $\{f_{1},g_{1}\}$.
  \item We have $f_{\sigma ,\rho} (PQ)=f_1 g_1$, $v_{\sigma ,\rho}(PQ)=v+w$. 
	\end{enumerate}
\end{theorem}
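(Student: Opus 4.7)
The plan is to reduce everything to a monomial-by-monomial computation and track the $(\sigma,\rho)$-weight. Writing $P=\sum a_{ij}x^i\partial^j$ and $Q=\sum b_{kl}x^k\partial^l$, the Leibniz formula $\partial^j x^k=\sum_m\binom{j}{m}\frac{k!}{(k-m)!}x^{k-m}\partial^{j-m}$ gives
$$x^i\partial^j\cdot x^k\partial^l=\sum_{m\ge 0}\binom{j}{m}\binom{k}{m}m!\,x^{i+k-m}\partial^{j+l-m}.$$
Since $\sigma+\rho>0$, the $(\sigma,\rho)$-weight of the $m$-th term equals $\sigma(i+k)+\rho(j+l)-m(\sigma+\rho)$, so the weight drops strictly in $m$. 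For (4) this is already enough: the $m=0$ term contributes the unique leading piece, and summing over pairs $(i,j)\in E(P,\sigma,\rho)$, $(k,l)\in E(Q,\sigma,\rho)$ yields precisely $f_1g_1$ as the associated polynomial of $PQ$, of weight $v+w$.

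For the commutator the $m=0$ terms of the two orderings cancel, so $[x^i\partial^j,x^k\partial^l]$ has weight at most $\sigma i+\rho j+\sigma k+\rho l-\sigma-\rho$, realized by the $m=1$ contribution with coefficient $(jk-li)$. Defining $T$ to be the sum of all these top-weight contributions coming from pairs on the top lines of $P$ and $Q$, and $U=[P,Q]-T$, item (1) follows directly: $v_{\sigma,\rho}(U)<v+w-\sigma-\rho$ because every term of $U$ arises either from $m\ge 2$ of top/top pairs or from pairs where at least one factor is off the top line. Comparing the coefficient $(jk-li)x^{i+k-1}y^{j+l-1}$ with the definition of $\{f_1,g_1\}$ identifies the associated polynomial of $T$ with $\{f_1,g_1\}$ up to the standard sign convention, which simultaneously yields (3) and the equivalence of (2a) and (2b).

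The remaining content of (2) is the quasi-homogeneous statement: when $v,w$ are integers, $\{f_1,g_1\}=0$ is equivalent to $g_1^v$ being proportional to $f_1^w$. Here I would invoke Euler's identity for quasi-homogeneous polynomials, $\sigma x\partial_xf_1+\rho y\partial_yf_1=vf_1$ (and similarly for $g_1$). From $\{f_1,g_1\}=0$ together with the fact that $f_1$ and $g_1$ depend on only two variables, one first concludes that they are algebraically dependent in $K[x,y]$. Combining algebraic dependence with the observation that $f_1^w$ and $g_1^v$ are both $(\sigma,\rho)$-homogeneous of the same weight $vw$ — and that any two such polynomials lying in the subalgebra generated by a common quasi-homogeneous primitive must be scalar multiples along a fixed top-line factorization — yields the proportionality $g_1^v=c\,f_1^w$.

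The main obstacle I anticipate is exactly (2c): the Leibniz bookkeeping for (1), (2a)$\Leftrightarrow$(2b), (3), (4) is a long but routine computation, whereas passing from $\{f_1,g_1\}=0$ to a genuine power relation requires a structural argument about Poisson-commuting quasi-homogeneous polynomials in two variables. The delicate points are handling repeated irreducible factors of $f_1$ or $g_1$ and using the integrality of $v,w$ correctly — without integrality the conclusion can only be phrased as equality of the fractional powers $f_1^{1/v}$ and $g_1^{1/w}$ in an appropriate sense. Once this quasi-homogeneous lemma is established, reassembling the pieces gives the full statement.
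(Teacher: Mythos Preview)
The paper does not prove this result; it is quoted from Dixmier \cite[Lemmas~2.4, 2.7]{Dixmier} in the preliminaries, with no argument supplied, so there is nothing in the paper to compare against. Your treatment of (1), (3), (4) and of (2a)$\Leftrightarrow$(2b) via the Leibniz expansion is correct and is the standard argument (this is Dixmier's Lemma~2.4). The sign remark is justified: with $\{f,g\}=f_xg_y-f_yg_x$ as defined in the paper one actually gets $-\{f_1,g_1\}$ for the top symbol of $[P,Q]$, as the case $[\partial,x]=1$ versus $\{y,x\}=-1$ already shows; this is harmless for the statement.

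For (2c) your outline has a real gap: the step ``algebraic dependence $+$ equal weight $\Rightarrow$ proportional powers'' is not yet an argument, and the appeal to a ``common quasi-homogeneous primitive'' presupposes exactly what needs proving (that $f_1,g_1$ are both polynomials in a single $h$), which you neither prove nor cite. A self-contained route uses Euler's identity more fully. From $\{f_1,g_1\}=0$ the Jacobian matrix of $(f_1,g_1)$ has rank $\le 1$ over $K(x,y)$, while applying it to the weighted Euler vector $(\sigma x,\rho y)$ gives $(v f_1,\, w g_1)$; rank $\le 1$ then forces the polynomial identity $w\,g_1\,\nabla f_1=v\,f_1\,\nabla g_1$, i.e.\ $d\bigl(g_1^{\,v}/f_1^{\,w}\bigr)=0$ in $\Omega^1_{K(x,y)/K}$, whence $g_1^{\,v}/f_1^{\,w}\in K$ since $\chara K=0$. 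The integrality of $v,w$ is used precisely so that $g_1^{\,v}/f_1^{\,w}$ lies in $K(x,y)$, and no factorization over $\bar K$ or separate handling of repeated irreducible factors is needed.
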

\begin{Def}
\label{D:almost_commute}
In the notation of theorem above, if $[P,Q]=T+U$ with $T=0$ (equivalently, $\{f_{\sigma, \rho}(P), f_{\sigma ,\rho}(Q)\}=0$), then we'll say that $P,Q$  {\it almost commute}. 

If $f=f_{\sigma,\rho}(P), g=g_{\sigma,\rho}(Q)$  have some powers proportional to each other, we'll say $f,g$  almost commute as well. 
\end{Def}

\begin{Def}
\label{D:monomial_type}
Suppose $(\sigma,\rho)$ are real non-negative numbers. $P\in A_1$ is called of {\it monomial type}, if the $(\sigma,\rho)$-homogeneous polynomial of $P$ is a monomial.
	
	Suppose $\varphi$ is an endomorphism, and the $(\sigma,\rho)$-homogeneous polynomials of $\varphi(\partial),\varphi(x)$  are monomials (which almost commute by theorem above). Then we call $\varphi$ of {\it monomial type associated with} $(\sigma,\rho)$.
\end{Def}

\section{From the Berest conjecture to the Dixmier conjecture}

In this section we'll prove our first result: 

\begin{theorem} 
	\label{T:main}
	Assume there exists a non-zero polynomial $f(X,Y)\in K[X,Y]$, which has a non-trivial commuting solution pair $(P,Q)\in A_{1}^{2}$, and the number of orbits under the group action of $Aut(A_1)$ on  solutions of $f$ in $A_{1}^{2}$ is finite. Then the Dixmier conjecture holds, i.e $\forall \varphi\in End(A_{1})-\{0\}$, $\varphi$ is an automorphism.
\end{theorem}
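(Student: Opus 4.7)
The plan is to take an arbitrary nonzero endomorphism $\varphi\in\End(A_{1})$ and show it is surjective. Since $A_{1}$ is simple, $\varphi$ is automatically injective, so iterating $\varphi$ on the given non-trivial solution $(P,Q)$ produces an infinite sequence $\{(\varphi^{k}(P),\varphi^{k}(Q))\}_{k\geq 0}$ of pairs, each of which is a solution of $f=0$ in $A_{1}^{2}$ and is a commuting pair by Theorem \ref{T:7}. The finiteness of the orbit space together with the pigeonhole principle then yields integers $0\leq m<n$ and an automorphism $\psi\in\Aut(A_{1})$ with $\psi(\varphi^{m}(P))=\varphi^{n}(P)$ and $\psi(\varphi^{m}(Q))=\varphi^{n}(Q)$. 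Setting $d=n-m$ and $\tau:=\psi^{-1}\circ\varphi^{d}$, one obtains an endomorphism $\tau\in\End(A_{1})$ that fixes the pair $(P_{0},Q_{0}):=(\varphi^{m}(P),\varphi^{m}(Q))$ pointwise.

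Once $\tau$ is shown to be an automorphism, $\varphi^{d}=\psi\tau$ is also an automorphism, and since $\varphi(A_{1})\supseteq\varphi^{d}(A_{1})=A_{1}$ this forces $\varphi$ to be surjective, hence an automorphism. The entire argument thus reduces to the following key claim: any nonzero endomorphism $\tau$ of $A_{1}$ that fixes a non-trivial commuting pair $(P_{0},Q_{0})$ must be an automorphism.

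To attack this claim I would first use the normal form results \ref{T:Dixmier8.5}, \ref{T:Dixmier8.6} together with the tame-generators theorem \ref{T:Dixmier8.10} to conjugate $(P_{0},Q_{0})$ by an element of $\Aut(A_{1})$ into a convenient form, ideally with $P_{0}$ of small order and with a non-monomial $(\sigma,\rho)$-homogeneous leading polynomial. Writing $u=\tau(x)$ and $v=\tau(\partial)$, the relations $\tau(P_{0})=P_{0}$ and $\tau(Q_{0})=Q_{0}$ then expand into polynomial identities in $u,v$ that, via Theorem \ref{T:Dixmier2.7}, force the top $(\sigma,\rho)$-homogeneous parts of $u$ and $v$ to almost commute and to satisfy power-proportionality along every admissible slope. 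Theorems \ref{T:Dixmier7.3} and \ref{T:Dixmier7.4} leave only the monomial-type possibilities (b)--(d) of Theorem \ref{T:Dixmier7.3}; combined with the defining relation $[v,u]=1$, Theorem \ref{T:Dixmier8.10} should then identify $\tau$ as a finite product of tame automorphisms, hence an automorphism.

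The main obstacle I anticipate is precisely the step from $\tau|_{K[P_{0},Q_{0}]}=\mathrm{id}$ to the monomial structure of the Newton polygons of $u$ and $v$: fixing a two-dimensional commutative subring is a priori weaker than what is needed, since ruling out non-automorphism endomorphisms with such a fixing locus is essentially as hard as Dixmier's conjecture itself. To bypass this I would invoke the orbit-finiteness hypothesis a second time, applying the pigeonhole argument to $\tau$ and to conjugates $\sigma^{-1}\tau\sigma$ with $\sigma\in\Aut(A_{1})$, and to further iterates $\varphi^{jd}$, so as to accumulate fixing conditions on a rich family of commuting pairs drawn from different orbits. Carrying out the resulting Newton polygon bookkeeping carefully---and verifying that the collected constraints indeed eliminate every non-automorphism possibility for $u,v$---is the technical heart of the proof.
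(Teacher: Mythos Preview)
Your opening moves match the paper's: iterate $\varphi$ on the solution, pigeonhole on orbits, and replace $\varphi$ by an endomorphism $\tau$ fixing a nontrivial element. You also correctly identify that proving ``$\tau$ fixes a nontrivial $P_0$ $\Rightarrow$ $\tau\in\Aut(A_1)$'' is the heart of the matter, and that this is essentially as hard as Dixmier. But at this point your proposal stops being a proof: the vague plan to ``accumulate fixing conditions'' by applying pigeonhole to conjugates and further iterates is not an argument, and the references to Theorems \ref{T:Dixmier7.3}--\ref{T:Dixmier8.10} do not by themselves pin down the Newton polygon of $\tau(\partial)$.

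The paper closes the gap with a specific mechanism you are missing. First, \emph{before} the pigeonhole step, it reduces $\varphi$ (by composing with automorphisms) to \emph{rectangular type}: the Newton polygons of $\varphi(\partial)$ and $\varphi(x)$ are rectangles with top-right vertex $(l,k)$ and $(\epsilon l,\epsilon k)$, so that for the weight $(\sigma,\rho)=(p\epsilon,p)$ the leading parts are monomials and one has the exact scaling law $v_{\sigma,\rho}(\varphi(R))=(k+\epsilon l)\,v_{\sigma,\rho}(R)$ whenever $f_{\sigma,\rho}(R)$ is a monomial (Lemma \ref{L:9}). After the pigeonhole produces $\tau$ with $\tau(P)=P$, a conjugation (Lemma \ref{L:10}) restores rectangular type while keeping the fixed point. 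The contradiction then comes from a \emph{second} pigeonhole, not on orbits but on the integers $m(q)=\min\{m:(\mathrm{ad}\,\varphi^q(\partial))^{m}P\text{ almost commutes with }\varphi(\partial)\}$: two values of $q$ with the same $m$ combined with the scaling law force $v_{\sigma,\rho}(P)=m(\sigma+\rho)$ and $n=m$ (Lemma \ref{L:calculations}). A direct Poisson-bracket computation on the $(\sigma,\rho)$-top part of $P$ then shows this is incompatible with $f_{\sigma,\rho}(P)$ being non-monomial, while the scaling law forbids it being monomial. None of these ingredients---rectangular reduction, the scaling identity, the second pigeonhole on $m(q)$, or the explicit bracket calculation---appears in your sketch, and they are what actually drives the proof.
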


The proof will follow from some preliminary lemmas. First recall one definition, cf. \cite[P. 620]{GGV}. 

\begin{Def}
\label{D:subrectangular}
	The Newton Polygon $N$ is called of subrectangular type, if $N$ is contained in the rectangle constructed by $\{(0,0),(0,k),(l,0),(l,k)\}$, and $(l,k)$ is the vertex of $N$ with $l,k \ge 1$. 
	
	Let $\varphi$ be an endomorphism of $A_1$. If the Newton Polygons of $\varphi(\partial),\varphi(x)$ are of subrectangular type, then we call such $\varphi$ of subrectangular type (a specially condition of monomial type).
\end{Def}

It is known (see \cite[Prop. 6.9]{GGV}) that for given $\varphi\in \End (A_1)\backslash \Aut (A_1)$ there is an automorphism $\psi$ such that $\psi\circ\varphi$ is of subrectangular type.  
So, from now on, we can always assume that $\varphi\in \End (A_1)$ is of the subrectangular type. It's easy to see that  for any positive pair $(\sigma,\rho)$, $f_{\sigma,\rho}(\varphi(\partial))$ is a monomial. In contrast, the Newton polygon of an automorphism $\Phi$ has the following properties.

\begin{lemma} \label{L:8}
	Any automorphism of $A_{1}$ can't be of subrectangular type. More precisely: if $\Phi \in Aut(A_{1})$, with $\Phi (\partial )=\sum a_{ij} x^i\partial^j$, and 	$n=sup\{i|\quad \exists j, a_{ij}\neq 0\}$, $m=sup\{j|\quad \exists i, a_{ij}\neq 0  \}$, then 
\begin{enumerate}
		\item $(0,n),(m,0)\in E(\Phi(\partial))$;
		\item $\forall i,j>0,(i,n),(m,j)\notin E(\Phi(\partial)) $; in particular, $f_{n,m}(\Phi(\partial),\Phi(x))$ are not monomials;
		\item one of the rate $\frac{m}{n}$ or $\frac{n}{m}$ must be an integer.
\end{enumerate}
\end{lemma}

The proof is contained e.g. in \cite[L. 8.7]{Dixmier} and \cite[Prop. 6.3]{GGV}. 

\begin{Def}
Suppose $\varphi$ is an endomorphism of subrectangular type. Let $(\sigma,\rho)$ be any  two positive integer numbers.

Assume $f_{\sigma,\rho}(\varphi(\partial))=c_1 x^{l}y^{k}$, $f_{\sigma,\rho}(\varphi(x))=c_2 x^{l'}y^{k'}$, where $l,k,l',k'$ are non-zero integers, $c_1, c_2\in K$. By theorem \ref{T:Dixmier2.7}, $f_{\sigma,\rho}(\varphi(\partial))$ is proportional with $f_{\sigma,\rho}(\varphi(x))$. Define $\epsilon(\varphi)\in \mathbf{Q}$ as
$$
(l',k')=\epsilon(\varphi)(l,k)
$$

We'll call this  $\epsilon(\varphi)$ as the rate of $\varphi(\partial)$ with $\varphi(x)$. Note that $\epsilon(\varphi)$ does not depend on $(\sigma,\rho)$. 
\end{Def}

Before we continue our discussion, we need the following  Lemma (we were unable to find an appropriate reference for it). 

\begin{lemma}\label{L:9}
	Suppose $\varphi$ is an endomorphism of subrectangular type. 
	\begin{enumerate}
\item	For any $q\in\dn$ the endomorphism $\varphi^q$ is of subrectangular type with $\epsilon(\varphi^{q})=\epsilon(\varphi)$.
	
\item	Assume $P\in A_{1}$, $P\notin K$ is  such that $f_{\epsilon,1}(P)$ is a monomial. Then  for any $\sigma,\rho >0$ the polynomial $f_{\sigma,\rho}(\varphi(P))$ is a monomial with 
\begin{equation} \label{E:vi}
	v_{\sigma,\rho}(\varphi(P))=(\rho k+\sigma l)v_{\epsilon,1}(P), 
\end{equation}
where $f_{\sigma,\rho}(\varphi(\partial))=c_1x^{l}y^{k}$, $f_{\sigma,\rho}(\varphi(x))=c_2x^{l\epsilon}y^{k\epsilon}$.
 \end{enumerate}
\end{lemma}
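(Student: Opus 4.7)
The plan is to establish item~(2) first by direct computation, invoking the multiplicativity of $f_{\sigma,\rho}$ from Theorem~\ref{T:Dixmier2.7}(4), and then deduce item~(1) by induction on $q$ using the resulting formula.

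For (2), I would write $P = \sum a_{ij} x^i \partial^j$, so that by hypothesis $f_{\sigma,\rho}(P) = a_{mn} x^m \partial^n$ is a single monomial. Expanding $\varphi(P) = \sum a_{ij}\,\varphi(x)^i \varphi(\partial)^j$ and applying multiplicativity from Theorem~\ref{T:Dixmier2.7}(4), one finds
$$v_{\sigma,\rho}\bigl(\varphi(x)^i \varphi(\partial)^j\bigr) = i\,v_{\sigma,\rho}(\varphi(x)) + j\,v_{\sigma,\rho}(\varphi(\partial)) = (\epsilon i + j)\,p(\epsilon l + k).$$
Since the $(\sigma,\rho)$-degree of $x^i\partial^j$ equals $p(\epsilon i + j)$, the monomials $x^i\partial^j$ contributing to the top of $\varphi(P)$ correspond exactly to the points $(i,j)$ lying on the $(\sigma,\rho)$-top line of $P$. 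By hypothesis this set is reduced to the single point $(m,n)$, so a second application of multiplicativity gives
$$f_{\sigma,\rho}(\varphi(P)) = a_{mn} c_2^m c_1^n\, x^{l(\epsilon m + n)}\partial^{k(\epsilon m + n)},$$
which is a monomial, and a direct calculation confirms its $(\sigma,\rho)$-weight equals $(k+\epsilon l)\,v_{\sigma,\rho}(P)$, which is the required formula \eqref{E:vi}.

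For (1) I would proceed by induction on $q$, with the base case $q=1$ being the hypothesis. Assume that $\varphi^{q-1}$ is of rectangular type with rate $\epsilon$; let $(L,K)$ and $(\epsilon L,\epsilon K)$ be the upper-right corners of $\varphi^{q-1}(\partial)$ and $\varphi^{q-1}(x)$ respectively. Writing $\varphi^{q-1}(\partial) = \sum b_{ij} x^i\partial^j$ gives $\varphi^q(\partial) = \sum b_{ij}\,\varphi(x)^i \varphi(\partial)^j$. Since $\varphi$ is rectangular, $f_{\sigma',\rho'}(\varphi(\partial)) = c_1 x^l\partial^k$ and $f_{\sigma',\rho'}(\varphi(x)) = c_2 x^{\epsilon l}\partial^{\epsilon k}$ for \emph{every} positive pair $(\sigma',\rho')$; hence the same multiplicativity argument as in (2), carried out in an arbitrary positive direction, shows that each summand $\varphi(x)^i \varphi(\partial)^j$ has Newton polygon contained in the rectangle $[0,l(\epsilon i+j)]\times[0,k(\epsilon i+j)]$ with non-vanishing upper-right corner $(l(\epsilon i+j),k(\epsilon i+j))$. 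Because $\epsilon>0$ and $(i,j)\in [0,L]\times[0,K]$, the maximum of $\epsilon i+j$ is attained uniquely at $(i,j)=(L,K)$, so $\varphi^q(\partial)$ fits inside the rectangle $[0,l(\epsilon L+K)]\times[0,k(\epsilon L+K)]$ with a non-zero coefficient at the upper-right corner. The parallel computation for $\varphi^q(x)$ (starting from the corner $(\epsilon L,\epsilon K)$ of $\varphi^{q-1}(x)$) places its upper-right corner at $\epsilon\cdot(l(\epsilon L+K),k(\epsilon L+K))$, whence $\epsilon(\varphi^q)=\epsilon$.

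The main delicate point is to rule out cancellation at the top corner and to verify that the rectangular shape — not merely a monomial top in a single direction — is preserved under the $q$-th iterate. Both issues hinge on two facts: positivity of $\epsilon$, which makes the corner of $\varphi^{q-1}(\partial)$ the unique maximizer of $\epsilon i+j$ on its Newton polygon; and the rectangularity of $\varphi$ itself, which via multiplicativity of $f_{\sigma',\rho'}$ collapses each product $\varphi(x)^i\varphi(\partial)^j$ to a single monomial top in every positive direction. Carefully combining these observations is what turns the one-dimensional statement of (2) into the Newton-polygon statement of (1).
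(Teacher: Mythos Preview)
Your argument is correct and follows essentially the same route as the paper's proof: both hinge on Theorem~\ref{T:Dixmier2.7}(4) to compute $f_{\sigma',\rho'}(\varphi(x)^i\varphi(\partial)^j)$ as the single monomial $c\,x^{l(\epsilon i+j)}\partial^{k(\epsilon i+j)}$ for every strictly positive weight $(\sigma',\rho')$, then use the unique maximization of $\epsilon i+j$ at the rectangular corner to conclude. The only differences are cosmetic: you prove item~(2) first and feed it into the induction for item~(1), whereas the paper treats item~(1) directly (for $q=2$, then iterates); and where you invoke ``every positive direction'' to pin the Newton polygon inside the target rectangle, the paper makes the same point by choosing the $(\sigma,\rho)$-line ``almost flat'' and ``almost vertical'' --- your formulation is arguably cleaner.
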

 
 \begin{proof} 1. Note that for any monomial $x^{u}\partial^{v}$ we have $\varphi(x^{u}\partial^{v})=\varphi(x)^{u}\varphi(\partial)^{v}$ and by theorem \ref{T:Dixmier2.7}, item 4 we have therefore 
 $$
 f_{\sigma ,\rho}(\varphi(x^{u}\partial^{v}))=cx^{l(u\epsilon+v)}y^{k(u\epsilon +v)}
 $$
for any positive $\sigma ,\rho$  and some $0\neq c\in K$. In particular, we can choose $\sigma ,\rho$ in such a way that the corresponding $(\sigma ,\rho)$-line $\sigma x+\rho y=\theta$ is almost flat or almost vertical. More precisely, we need to choose such $\sigma ,\rho$ that the lines through the vertex $(l,k)$ and through the vertex $(l\epsilon ,k\epsilon)$ intersect the vertical axe at the point $(0,a)$ and correspondingly at the point $(0,a\epsilon)$ with $a<k+1$ and $a\epsilon <k\epsilon +1 $ (and choose such $\sigma ,\rho$ that this lines intersect the flat axe at the point $(b,0)$ and correspondingly at the point $(b\epsilon,0)$ with $b<l+1$ and $b\epsilon <l\epsilon +1$). With this choice it just suffices to prove that the monomials $f_{\sigma ,\rho}(\varphi(x^{u}\partial^{v}))$, where $(u,v)$ are the vertices inside the rectangle $\{(0,0),(0,k),(l,0),(l,k)\}$,  belong to the rectangle $\{(0,0),(0,k(l\epsilon +k)),(l(l\epsilon +k),0),(l(l\epsilon +k),k(l\epsilon +k))\}$ (and correspondingly, the monomials $f_{\sigma ,\rho}(\varphi(x^{u}\partial^{v}))$, where $(u,v)$ are the vertices inside the rectangle $\{(0,0),(0,k\epsilon),(l\epsilon,0),(l\epsilon,k\epsilon)\}$,  belong to the rectangle $\{(0,0),(0,\epsilon k(l\epsilon +k)),(\epsilon l(l\epsilon +k),0),(\epsilon l(l\epsilon +k), \epsilon k(l\epsilon +k))\}$). Indeed, if it is true, then  all other monomials of  $\varphi(x^{u}\partial^{v})$ will belong to the same rectangles $\{(0,0),(0,k(l\epsilon +k)),(l(l\epsilon +k),0),(l(l\epsilon +k),k(l\epsilon +k))\}$, $\{(0,0),(0,\epsilon k(l\epsilon +k)),(\epsilon l(l\epsilon +k),0),(\epsilon l(l\epsilon +k), \epsilon k(l\epsilon +k))\}$, i.e. $\varphi$ will be of subrectangular type.

But this is easy: $(u,v)$ is a vertex inside the rectangle $\{(0,0),(0,k),(l,0),(l,k)\}$ iff $u\le l$ and $v\le k$. Clearly, if these inequalities hold, then $l(u\epsilon+v)\le l(l\epsilon +k)$ and $k(u\epsilon +v)\le k(l\epsilon +k)$, i.e. $f_{\sigma ,\rho}(\varphi(x^{u}\partial^{v}))$ belongs to the rectangle $\{(0,0),(0,k(l\epsilon +k)),(l(l\epsilon +k),0),(l(l\epsilon +k),k(l\epsilon +k))\}$. 

Thus, $\varphi^2$ is of subrectangular type, and the same arguments show that $\varphi^q$ is of subrectangular type for any $q\in \dn$. A simple calculation now shows that: for any  $q\in \dn$,
\begin{equation}
\label{E:cases}
\begin{cases}
	f_{\sigma ,\rho}(\varphi^{q}(\partial))=\tilde{c}_1x^{l(k+\epsilon l)^{q-1}}y^{k(k+\epsilon l)^{q-1}}
	\\f_{\sigma ,\rho}(\varphi^{q}(x))=\tilde{c}_2x^{l\epsilon(k+\epsilon l)^{q-1}}y^{k\epsilon(k+\epsilon l)^{q-1}}
\end{cases}
\end{equation}
so that $\epsilon(\varphi^{q})=\epsilon(\varphi)$, too.

2. Suppose $P=cx^{u}\partial^{v} +g(x,\partial)$, with $f_{\epsilon,1}(P)=cx^{u}y^{v}$. Then $\varphi(P)=\varphi(cx^{u}\partial^{v})+\varphi(g)$. Note that for any monomial $x^{u'}\partial^{v'}$ we have  
 $$v_{\sigma,\rho}(\varphi(x^{u'}\partial^{v'}))= (\rho k+\sigma l)(u'\epsilon+v').
 $$ 
Since the $(\epsilon,1)$-weights of all monomials in $g$ are less than $(u\epsilon+v )$, we get from this  
 $$v_{\sigma,\rho}(\varphi(P))= (\rho k+\sigma l)v_{\epsilon,1}(P), \quad 
 v_{\sigma,\rho}(\varphi(g))< (\rho k+\sigma l)v_{\epsilon,1}(P),
 $$ 
 and $f_{\sigma,\rho}(\varphi(P))=f_{\sigma,\rho}(\varphi (cx^{u}\partial^{v}))$ is a monomial by theorem \ref{T:Dixmier2.7}, item 4 (because $f_{\sigma,\rho}(\varphi (x), \varphi (\partial ))$ are monomials for any $\sigma, \rho >0$). 
\end{proof}

\begin{lemma}
\label{L:infinte orbit}
	Suppose $P,Q\in A_1$ is a solution of a non-zero polynomial equation $f(X,Y)=0$, suppose $\varphi\in End(A_1)-Aut(A_1)$ is of subrectangular type. Then there exist automorphism $\psi $ of $A_1$ such that for $i\neq j$, $i>0$, $j>0$, $\varphi^i\circ \psi (P,Q)$ doesn't lie in the same orbit with $\varphi^j\circ\psi(P,Q)$. 
\end{lemma}

\begin{proof}
    Suppose as before $f_{\sigma,\rho}(\varphi(\partial))=c_1 x^{l}y^{k}$, $f_{\sigma,\rho}(\varphi(x))=c_2 x^{l'}y^{k'}$ (for any positive $\sigma ,\rho$), where $l,k,l',k'$ are non-zero integers, $c_1, c_2\in K$, $\epsilon=\epsilon(\varphi)$. Now we can choose an integer $N$ big enough such that the automorphism:
    $$ \psi:
    \begin{cases}
    \partial\rightarrow\partial
    \\x\rightarrow x+\partial^N
    \end{cases}
    $$
    satisfies the following properties:
    \begin{enumerate}
    	\item $\psi(P,Q)$ are both formally elliptic.
    	\item $f_{\epsilon, 1}(\varphi \circ\psi (P,Q))$ is a monomial. 
    \end{enumerate}
    This can be done since our $\epsilon$ has been fixed when we choose $N$.
    
    According to Lemma \ref{L:9}, we know for any $i\in \dn$ that the Newton polygons of $\varphi^i\circ \psi(P,Q)$ are of subrectangular type, because $f_{\sigma,\rho}(\varphi^i \circ\psi (P,Q))$ are monomials for any $\sigma, \rho >0$. Now it's enough to show for any $j>i>0$, $\varphi^j\circ \psi(P,Q)$ lies in a different orbit with $\varphi^i\circ \psi(P,Q)$.
    
   Assume the converse. Then, according to the definition of the orbit, we know there exists an automorphism $\Phi\in Aut(A_1)$ such that 
    $$
    \Phi \circ \varphi^i\circ \psi(P,Q)=\varphi^j\circ \psi(P,Q).
    $$
     According to Lemma \ref{L:9} again we know that the Newton polygons of $\varphi^j\circ \psi(P,Q)$, $\varphi^i\circ \psi(P,Q)$ are of subrectangular type. 
    
    According to lemma \ref{L:8} there exist $\sigma, \rho >0$ such that $f_{\sigma,\rho} (\Phi (\partial ), \Phi (x))$ are not monomials. Then if, say, $f_{\sigma,\rho}(\varphi^i \circ\psi (P))=c x^{i'}y^{j'}$, we'll have by theorem \ref{T:Dixmier2.7} item 4 that 
$$
f_{\sigma,\rho} (\Phi (\varphi^i \circ\psi (P)))= f_{\sigma,\rho} (\Phi (c x^{i'}\partial^{j'}))= 
c f_{\sigma,\rho} (\Phi (x))^{i'} f_{\sigma,\rho} (\Phi (\partial ))^{j'} 
$$ 
is not a monomial, but $f_{\sigma,\rho} (\varphi^i \circ\psi (P))$ is a monomial -- a contradiction. 
\end{proof}

\begin{proof} (of theorem \ref{T:main}) Suppose  Dixmier's conjecture is not true, hence there exists an endomorphism $\varphi\in End(A_1)-\{0\}$ such that $\varphi$ is not a automorphism, where  we can asssume $\varphi$ is of subrectangular type. Assume $(P,Q)$ is a solution  of the equation $f=0$. According to Lemma \ref{L:infinte orbit}, we know there exists automorphism $\psi$, such that for any integer $i$, $\varphi^i\circ\psi(P,Q)$ lie in different solution orbits, so that the number of solution orbits is infinite,  a contradiction.
\end{proof}

\section{Fixed points of endomorphisms}
\label{S:second}

Let's start with the following motivating observation. 
Let $\varphi$ be an endomorphism of subrectangular type, and assume it is not an automorphism. Without loss of generality we can assume that neither of $\epsilon (\varphi ),\frac{1}{\epsilon (\varphi )}$ is integer (see \cite[Prop. 6.3]{GGV} and \cite[L.6.2(6)]{GGV}). Assume that $f_{\sigma,\rho} (\varphi (\partial ))=\tilde{c}_1 x^l \partial^k$, $f_{\sigma,\rho} (\varphi (x))=\tilde{c}_2 x^{\epsilon l} \partial^{\epsilon k}$ (here $\sigma , \rho$ are arbitrary positive numbers).

Suppose the conditions of theorem \ref{T:main} holds, choose a non-trivial equation $F=0$ such that the number of solution orbits is finite. Then  choose a non-trivial solution $(P,Q)$ in a solution orbit, choose $(\sigma,\rho)=(p\epsilon (\varphi ),p)$ such that $\sigma, \rho \in \dn$. 

W.l.o.g we can suppose $f_{\sigma,\rho}(P),f_{\sigma,\rho}(Q)$ are monomials: if not, take automorphism $\Phi_{N,1}$ (or $\Phi_{N,1}'$) with $N\gg 0$ an integer big enough, then $(\Phi(P),\Phi(Q))$ is also a non-trivial solution to the equation $F=0$, and $f_{\sigma,\rho}(\Phi(P)),f_{\sigma,\rho}(\Phi(Q))$ are monomials. 

Since the number of orbits of solutions is finite, there exist  $p,q\in \dn$ such that $\varphi^{p+q}(P,Q)$ is on the same orbit with $\varphi^{q}(P,Q)$. According to the definition of the orbit, we know $\exists \psi \in \Aut( A_{1})$, such that 
\begin{equation} \label{E:v}
\varphi^{p}(\varphi^{q}(P,Q))=\psi\circ \varphi ^{q}(P,Q) 
\end{equation}

By lemma \ref{L:9}  we have:  
$$
v_{\sigma,\rho}(\varphi (\partial ,x))=(\rho k+ \sigma l) v_{\epsilon ,1}(\partial ,x) = (k+ \epsilon l) v_{\sigma,\rho} (\partial ,x),
$$
whence 
$$f_{\sigma,\rho}(\varphi^{q}(\partial))=\tilde{c}_1x^{l(k+\epsilon l)^{q-1}}y^{k(k+\epsilon l)^{q-1}},\quad f_{\sigma,\rho}(\varphi^{q}(x))=\tilde{c}_2x^{l\epsilon(k+\epsilon l)^{q-1}}y^{k\epsilon(k+\epsilon l)^{q-1}}$$
are non-trivial monomials for any $q\in\dn$ and  therefore $f_{\sigma,\rho}(\varphi^{q}(P)),f_{\sigma,\rho}(\varphi^{q}(Q))$ are monomials for arbitrary  $q\in\dn$, too.

Observe that the $(\sigma,\rho)$-top line of the Newton polygon of $\psi^{-1}(\partial )$  can only go across one point of $\psi^{-1}(\partial)$, say $(0,n)$ or $(m,0)$, since one of $\frac{n}{m},\frac{m}{n}$ must be an integer (cf. lemma \ref{L:8}) and neither of $\epsilon,\frac{1}{\epsilon}$ is an integer. So the $(\sigma,\rho)$-homogeneous polynomial associated to $\psi^{-1}(\partial)$ is a monomial, hence the $(\sigma,\rho)$-homogeneous polynomial  associated to $\psi^{-1}(x)$ is a monomial too by theorem \ref{T:Dixmier2.7}. Using lemma \ref{L:9} again we get that $f_{\sigma, \rho}(\varphi^{p}\circ\psi^{-1}(x,\partial))$ are monomials for any $p\in\dn$. 

Now let $p,q$ be as in \eqref{E:v}. Put $\tilde{\varphi}=\varphi^{p}\circ \psi^{-1},(\tilde{P},\tilde{Q})=\varphi^{p}(\varphi^{q}(P,Q))=\psi\circ \varphi ^{q}(P,Q)$. Simple calculation shows that $\tilde{\varphi}(\tilde{P},\tilde{Q})=(\tilde{P},\tilde{Q})$. Now take the new $\tilde{\varphi}$ instead of $\varphi$, and $\tilde{P}$ instead of $P$, we get that the new $\tilde{\varphi}$ has a fixed point $\tilde{P}$.

Note that the new $\tilde{\varphi}$ may not be of subrectangular type any more. But it is of monomial type associated to the old $(\sigma,\rho)=(p\epsilon,p)$ and simple observation shows that the vertex in the top line does not lie on the axis, besides, $k,l>1$. The new rate of $\tilde{\varphi}$ may change, too (actually, it must be changed), i.e $\epsilon(\tilde{\varphi})\neq \epsilon (\varphi )$.\footnote{The reason why $\tilde{\epsilon}=\epsilon(\tilde{\varphi})$ must be changed: if not, $f_{\sigma,\rho}(P)$ can't be a monomial any more. This is because $\varphi(P)=P$, so $v_{\sigma,\rho}(\varphi(P))=v_{\sigma,\rho}(P)$. If $f_{\sigma,\rho}(P)$ is a monomial, then $v_{\sigma,\rho}(\varphi(P))=(k+\epsilon l)v_{\sigma,\rho}(P)$ due to lemma \ref{L:9} -- a contradiction, since $P$ is not a constant and therefore $v_{\sigma,\rho}(P)>0$.}
		
Our next aim is to prove that, in fact, an endomorphism of monomial type can not have a fixed point. In particular, this will imply another proof of our first theorem. 

\begin{theorem} 
	\label{T:fixed}
	Suppose $\varphi$ is an endomorphism of $A_{1}$ of monomial type associated to $(\sigma_{0},\rho_{0})$, $ \sigma_{0},\rho_{0} \in \dn$. Suppose $f_{\sigma_{0},\rho_{0}}(\varphi (\partial ))=c x^ly^k$, where $c\in K$, $l,k>0$ and either $l$ or $k$ is greater than 1. 
	
	Then $\varphi$ has  no non-trivial fixed points, i.e. there are no $P\in A_1$, $P\notin K$, s.t. $\varphi (P)=P$. 
\end{theorem}

The proof will be divided into several steps.

\subsection{Step 1}

We start with the following lemma

\begin{lemma}\label{L:10}
	Suppose $\varphi$ is an endomorphism of $A_{1}$ of monomial type associated to $(\sigma_{0},\rho_{0})$, $ \sigma_{0},\rho_{0} \in \dn$. Suppose $f_{\sigma_{0},\rho_{0}}(\varphi (\partial ))=c x^ly^k$, where $c\in K$, $l,k>0$ and either $l$ or $k$ is greater than 1. Suppose that there exists  $P\in A_1$, $P\notin K$, with $\varphi(P)=P$. 
	
	Then there exists an automorphism $\Phi$ such that  $\tilde{\varphi}:=\Phi\circ \varphi\circ \Phi^{-1}, \tilde{P}=\Phi(P)$ satisfy 
	\begin{enumerate}
		\item [(1)]$\tilde{\varphi}$ is of subrectangular type,
		\item[(2)] $\tilde{\varphi}(\tilde{P})=\tilde{P}$, $\tilde{P}\notin K$.
	\end{enumerate}
	
	\begin{proof} Item (2) is obvious,  so we pay our attentions to (1). The proof uses a standard technique from the paper \cite{Dixmier}, cf. \cite{GGV}. Again we give it here, because we were unable to find an appropriate reference. 
		
		\begin{enumerate}
			\item [(I)] Consider first the $(\sigma , \rho )$-top line of the Newton polygon of $\varphi (\partial )$ passing through the vertex $(l,k)$ and a vertex $(\tilde{l}, \tilde{k})$ with $\tilde{k}<k$. By \cite[7.4]{Dixmier} combined with \cite[7.3]{Dixmier} the polynomial $f_{\sigma ,\rho}(\varphi (\partial ))$ is of the form from items (c) or (d) in \cite[7.3]{Dixmier}. Take $\Phi\in \Aut(A_{1})$
			$$\Phi :\begin{cases}
			\partial \rightarrow &\partial
			\\x \rightarrow & x+\mu \partial^{\frac{\sigma}{\rho}}
			\end{cases}$$
			for an appropriate $\mu\in K$. 
			Then put $\tilde{\varphi}=\Phi\circ \varphi\circ \Phi^{-1}, \tilde{P}=\Phi(P)$. Note that $\Phi^{-1}(\partial)=\partial$, so that the Newton Polygon of $\tilde{\varphi}(\partial)$ is totally equal to $\Phi \circ \varphi(\partial)$. We know the Newton Polygon of $\tilde{\varphi}(\partial)$ becomes flatter than before, and we can continue on this procedure until it is strictly flat, i.e. the Newton polygon of $\tilde{\varphi} (\partial )$ lies under the horizontal line going through $(l,k)$. We may denote the result as $\varphi,P$ again.
			\item[(II)] Now we should pay our attention to the Newton Polygons of $\varphi(x)$. Since the Newton polygon of $\varphi(\partial)$ has $(0,1)$-top line passing across $(l,k)$  and $\varphi(\partial)$  almost commutes with $\varphi(x)$, we know by theorem \ref{T:Dixmier2.7} that $\varphi(x)$ has Newton Polygon with the $(0,1)$-top line passing across $(l',k')$  too. More specifically, it is $k'=sup\{j|(i,j)\in E(\varphi(x))\}$ and $l'=sup\{i|(i,k')\in E(\varphi(x))\}$. Note that either $l'$ or $k'$ is greater than 1 (they are related to $k,l$ via multiplication by $\epsilon (\varphi )$).
			
			Consider now  the $(\sigma , \rho )$-top line passing through the vertex $(l',k')$ and a vertex $(\tilde{l'}, \tilde{k'})$ with $\tilde{k'}> k'$. By the same arguments as in (I), in this case $f_{\sigma ,\rho}(\varphi (\partial ))$ is of the form from items (b) or (d) in \cite[7.3]{Dixmier}.
			Now take 
			$$\Phi:\begin{cases}
			\partial \rightarrow &\partial +\mu x^{\frac{\rho}{\sigma}}
			\\x \rightarrow & x
			\end{cases}$$
			for an appropriate $\mu\in K$ 
			and $\tilde{\varphi}=\Phi\circ \varphi\circ \Phi^{-1}, \tilde{P}=\Phi(P)$. 
			
Note that $\Phi^{-1}(x)=x$, so that the Newton Polygon of $\tilde{\varphi}(x)$ is totally equal to $\Phi \circ \varphi(x)$, and the right top line of the Newton Polygon of $\tilde{\varphi}(x)$ becomes more vertical than before. As in (1), we can continue this procedure until the right top line of $\tilde{\varphi}(x)$ becomes strictly vertical, after that we  get $\tilde{\varphi}$ of subrectangular type in view of  theorem \ref{T:Dixmier2.7}.
		\end{enumerate}
	\end{proof}
\end{lemma}

\subsection{Step 2}
\label{S:ooo}

Suppose $\varphi$ is an endomorphism of subrectangular type with a fixed point $P\notin K$ (i.e $\varphi(P)=P$), $\epsilon :=\epsilon (\varphi )$, $(\sigma ,\rho)=(\tilde{p}\epsilon, \tilde{p})\in \dn^2$ and suppose $f_{\sigma ,\rho}(\varphi(\partial))=c_1x^{l}y^{k}$, $f_{\sigma ,\rho}(\varphi(x))=c_2x^{l'}y^{k'}$, with $(l',k')=\epsilon(l,k)$.

Since $\ord_{x}(P)<\infty $, we know $\exists N$, such that $(ad\partial)^{N}P=0$, and since $\varphi$ is an endomorphism, then for any integer $q>0, (ad\varphi^{q}(\partial))^{N}P=0 $. Then due to theorem \ref{T:Dixmier2.7}, we know that $\exists m<N$ such that $(ad\varphi^{q}(\partial))^{m}P$  almost commutes with $\varphi^{q}(\partial)$(then it almost commutes with $\varphi(\partial)$). 

For any $q\in \dn$ we define the number $m(q)$ as {\it the first positive integer} such that $(ad\varphi^{q}(\partial))^{m(q)}P$  almost commutes with $\varphi^q (\partial)$  (it is well-defined as we discuss above, and $m(q)<N$).

In the same way we define the number $n(q)$ to be {\it the first positive integer} such that \\
$f_{\sigma ,\rho}((ad\varphi^{q}(\partial))^{n(q)}P)$ is a monomial. Obviously, it's well defined, with $n(q)\leq m(q)$ for any  $q\in \dn$. 

Since $\dn$ is an infinite set and the number of possible pairs $(n(q),m(q))$ is finite, there exist two different positive numbers $p,q$, $p<q$ such that $n(p)=n(q)$, $m(p)=m(q)$.

\begin{lemma}
\label{L:calculations}
Let $\varphi$, $P$, $(\sigma ,\rho)$, $p<q$ be as above. Put $n:=n(p)=n(q)$, $m:=m(p)=m(q)$.  Then $n=m$ and 
$$v_{\sigma,\rho}(P)=m(\sigma+\rho).$$
In particular, $n$ and $m$ don't depend on the choice of $p<q$. 
\end{lemma}

\begin{proof}
Assume $\theta_{0}=v_{\sigma,\rho}(P)$ (note that $\theta_{0}>0$ since $P\notin K$), and set 
$$ \theta_{1}=v_{\sigma,\rho}(ad\varphi^{q}(\partial)P),\quad  \ldots , \quad \theta_{m}=v_{\sigma,\rho}([ad\varphi^{q}(\partial)]^{m}P).
$$
From our assumption on $m$ and theorem \ref{T:Dixmier2.7} it follows that the equations 
$$
\theta_{i}=\theta_{i-1}+v_{\sigma,\rho}(\varphi^{q}(\partial))-\sigma-\rho
$$
 hold for $1\leq i\leq m$. However, we have (cf. formulae \eqref{E:cases})
\begin{equation}\label{E:vii}
	\theta_{m}=mk(k+\epsilon l)^{q}+\theta_{0}-m(\sigma+\rho).
\end{equation}
On another side, set 
$$\eta_{0}=\theta_{0}=v_{\sigma,\rho}(P), \quad \eta_{1}=v_{\sigma,\rho}(ad\varphi^{p}(\partial)P), \quad \ldots ,\quad \eta_{m}=v_{\sigma,\rho}([ad\varphi^{p}(\partial)]^{m}P).
$$
Then we have 
 \begin{equation}\label{E:viii}
 	\eta_{m}=mk(k+\epsilon l)^{p}+\eta_{0}-m(\sigma+\rho)
 \end{equation}
Since $f_{\sigma,\rho}([ad\varphi^{p}(\partial)]^{m}(P))$ and $f_{\sigma,\rho}([ad\varphi^{q}(\partial)]^{m}(P))$ are both monomials according to our assumptions, then condition in Lemma \ref{L:9} is satisfied. Applying $\varphi$ $(q-p)$ times and using \eqref{E:vi} (taking into account that $(\rho k+\sigma l) v_{\epsilon ,1}(P)= (k+\epsilon l) v_{\sigma ,\rho }(P)$), we get
\begin{equation}\label{E:ix}
	\theta_{m}=(k+\epsilon l)^{q-p}\eta_{m}.
\end{equation}
Comparing with \eqref{E:vii}, \eqref{E:viii}, \eqref{E:ix}, we get
\begin{equation}\label{E:x}
	\theta_{0}=m(\sigma+\rho).
\end{equation}
We can rewrite this procedure for $n$ instead of $m$ in the same way, but since the result \eqref{E:x} has no relation with $p,q$, we immediately get $n=m$, which means, the first time $f_{\sigma,\rho}([ad\varphi^{p}(\partial)]^{n}P)$ becomes a monomial, it almost commutes with $f_{\sigma,\rho}(\varphi(\partial))$, i.e. is proportional to it. In other words, $[ad\varphi^{q}(\partial)]^{m-1}(P)$ is not of monomial type associated to $(\sigma,\rho)$.
\end{proof}

\subsection{Step 3}  

Again, suppose $\varphi$ is an endomorphism of subrectangular type with a fixed point $P\notin K$ (i.e $\varphi(P)=P$), and suppose $f_{\sigma,\rho}(\varphi(\partial))=c_1x^{l}y^{k}, f_{\sigma,\rho}(\varphi(x))=c_2x^{l'}y^{k'}$, with $(l',k')=\epsilon (l,k)$, $v_{\sigma,\rho}(P)=m(\sigma+\rho)$ as in the previous step. We are now going to show it's impossible.

First we should notice that $f_{\sigma,\rho}(P)$ can't be a monomial. If not, since $\varphi(P)=P$, we have $v_{\sigma,\rho}(\varphi(P))=v_{\sigma,\rho}(P)$, on the other hand, by lemma \ref{L:9} we know that $v_{\sigma,\rho}(\varphi(P))=(\rho k+\sigma l) v_{\epsilon ,1}(P)=(k+\epsilon l)v_{\sigma,\rho}(P)$, it's impossible as $v_{\sigma,\rho}(P)>0$.

Now assume $f_{\sigma,\rho}(P)=c_{1}x^{n_{1}}y^{m_{1}}+c_{2}x^{n_{2}}y^{m_{2}}+\cdots$, $c_i\neq 0$ (here $n_{i},m_{i}$ has no relation with $n,m$). According to the definition of weight degree $v_{\sigma,\rho}$ and by lemma \ref{L:calculations}, we know that for any integer $i$ 
\begin{equation}\label{E:xi}
	\sigma n_{i}+\rho  m_{i}=m(\sigma+\rho). 
\end{equation}

Let $p,q$ be as in Lemma \ref{L:calculations}. We have calculate  before (see \eqref{E:cases}):
$$\begin{cases}
f_{\sigma,\rho}(\varphi^{q}(\partial))=\tilde{c}_1x^{l(k+\epsilon l)^{q-1}}y^{k(k+\epsilon l)^{q-1}}
\\
f_{\sigma,\rho}(\varphi^{q}(x))=\tilde{c}_2x^{l\epsilon(k+\epsilon l)^{q-1}}y^{k\epsilon(k+\epsilon l)^{q-1}}
\end{cases}$$
Denote $u=l(k+\epsilon l)^{q-1}, v=k(k+\epsilon l)^{q-1}$ for convenience. According to the definition of $m$, we know $m+1$ is the first number such that $\{ad (f_{\sigma,\rho}(\varphi^{q}(\partial))) \}^{m+1}(f_{\sigma,\rho}(P))=0$.

Consider $$\{x^{u}y^{v},x^{n_{i}}y^{m_{i}}  \}=(vn_{i}-um_{i})x^{n_{i}+u-1}y^{m_{i}+v-1} $$
 and for any  $r\in \dn$ $\exists \lambda=\lambda(r,i) \in K$, such that 
 $$ \{adx^{u}y^{v}  \}^{r}(x^{n_{i}}y^{m_{i}})=\lambda x^{n_{i}+r(u-1)}y^{m_{i}+r(v-1)}$$
  Since $f_{\sigma,\rho}([ad\varphi^{q}(\partial)]^{m}(P))$ is a monomial, the homogeneous terms in $P$ must be eliminated by taking multiple Poisson bracket until there is only one term left.

Notice that if $i\neq j$, then   
$$\lambda(r,i) x^{n_{i}+r(u-1)}y^{m_{i}+r(v-1)}= \lambda(r,j) x^{n_{j}+r(u-1)}y^{m_{j}+r(v-1)}$$
only if $\lambda(r,i)=\lambda(r,j)=0$. 
This means the terms will be eliminated only if $\lambda(r,i)=0$ for some $r$.

Now suppose $\xi =\xi (i)\le m$ is the last integer that $\lambda=\lambda(\xi,i )$ is not equal to 0. This means  $\{x^{u}y^{v},x^{n_{i}+\xi(u-1)}y^{m_{i}+\xi(v-1)}\}=0$. Thus, we have 
	$$v(n_{i}+\xi(u-1))-u(m_{i}+\xi(v-1))=0. $$

Simplifying the equation, we have $v n_{i}-um_{i}-v\xi+u\xi=0$. Since $u=l(k+\epsilon l)^{q-1}, v=k(k+\epsilon l)^{q-1}$, we can eliminate the common factor $(k+\epsilon l)^{q-1}$, thus we have 
	$$k n_{i}-l m_{i}-k\xi+l\xi=0. $$

Eliminate $m_{i}$ by \eqref{E:xi}, we have
	$$k n_{i}-l(\frac{m(\sigma+\rho)-\sigma n_{i}}{\rho}) -k\xi+l\xi=0. $$

From this we can get a representation of $n_{i}$: 
\begin{equation}\label{xii}
	n_{i}=\frac{lm(\sigma+\rho)+\xi\rho(k-l)}{k\rho+l\sigma}
\end{equation}
If $\xi =m$, we can solve the equation and get $n_{i}=m_{i}=m$. If  $\xi =m-1$, then $n_{i}$ can not be an integer since $ \rho(k-l)<k\rho+l\sigma$. This means that $c_m x^m y^m= f_{\sigma ,\rho}([ad\varphi^{q}(\partial)]^{m}(P))$,  where $c_m\neq 0$, is the last monomial surviving after applying the Poisson bracket, because otherwise all $\xi (i)<m-1$, and then $\{ad (f_{\sigma,\rho}(\varphi^{q}(\partial))) \}^{m-1}(f_{\sigma,\rho}(P))=0$, a contradiction with definition of $m$. 
On the other hand, this also means there are no monomials $x^{m_{j}}\partial^{n_{j}}$ eliminated after applying the Poisson bracket  $(m-1)$-th time, and this means $[ad\varphi^{q}(\partial)]^{m-1}(P)$ is of monomial type associated to $(\sigma,\rho)$. This contradicts to the result we get in lemma \ref{L:calculations}, i.e. that $m=n$, thus $f_{\sigma,\rho}(P)$ must be a monomial - a contradiction.

\begin{rem}
There is an interesting question  by A. Ya. Kanel-Belov, whether  one can use the degree estimates from \cite{ML1}, \cite{ML-Y}, \cite{BK-Y} to give an alternative proof of the absence of the fixed points  in  theorem \ref{T:fixed}. It seems to be non-trivial. 
\end{rem}

\noindent J. Guo,  School of Mathematical Sciences, Peking University and Sino-Russian Mathematics Center,  Beijing, China and 
Lomonosov Moscow State  University, faculty of mechanics and mathematics, department of differential geometry and applications, Leninskie gory, GSP, Moscow, \nopagebreak 119899,
\\ 
\noindent\ e-mail:
$123281697@qq.com$

\vspace{0.5cm}

\noindent A. Zheglov, Lomonosov Moscow State  University, faculty
of mechanics and mathematics, department of differential geometry
and applications, Leninskie gory, GSP, Moscow, \nopagebreak 119899,
Russia
\\ \noindent e-mail
 $azheglov@mech.math.msu.su$

\end{document}